\newcommand{\R}{\mathbb{R}}
\newcommand{\N}{\mathbb{N}}
\newcommand{\Cc}{\mathcal{C}}
\newcommand{\J}{\mathcal{J}}
\newcommand{\X}{\mathcal{X}}
\newcommand{\U}{\mathcal{U}}
\newcommand{\W}{\mathcal{W}}
\newcommand{\Ac}{\mathcal{A}}
\newcommand{\Oc}{\mathcal{O}}
\newcommand{\Lc}{\mathcal{L}}
\newcommand{\I}{\mathcal{I}}
\newcommand{\Pc}{\mathcal{P}}
\newcommand{\Sys}{\mathcal{S}}
\newcommand{\B}{\mathbb{B}}
\newcommand{\K}{\mathcal{K}}
\newcommand{\Acl}{A_{\rm cl}}
\newcommand{\trans}[2][]{\stackrel[#1]{#2}{{\rightsquigarrow}} }
\newcommand{\Post}{{\rm Post}}
\newcommand{\Available}{{\rm Available}}
\newcommand{\diag}{{\rm diag}}
\newcommand{\co}{{\rm co}}
\newtheorem{theorem}{Theorem}
\newtheorem{corollary}{Corollary}
\newtheorem{assumption}{Assumption}
\newtheorem{definition}{Definition}
\newtheorem{lemma}{Lemma}
\definecolor{myblue}{rgb}{0.2,0.2,0.7}
\definecolor{myred}{rgb}{0.7,0.2,0.2}
\definecolor{mygreen}{rgb}{0.2,0.6,0.2}
\newcommand{\envend}{\hfill$\triangle$}
\title{\LARGE \bf
	{State-feedback Abstractions for Optimal Control of Piecewise-affine Systems}
}
\author{Lucas N. Egidio$^{*}$, Thiago Alves Lima$^{*}$ and Raphaël M. Jungers$^{*}$% <-this % stops a space
\thanks{R. Jungers  is a FNRS honorary Research Associate. This project has received funding from the European Research Council (ERC) under the \emph{European Union's Horizon 2020 research and innovation programme} under grant agreement No 864017 - L2C. RJ is also supported by the Innoviris Foundation and the FNRS (Chist-Era Druid-net). {The authors thank Anne-Kathrin Schmuck and Matteo Della Rossa for the fruitful discussions.} }% <-this % stops a space
\thanks{$^{*}$Lucas N. Egidio, Thiago Alves Lima, and Raphaël M. Jungers are with ICTEAM Institute, Universit\'{e} Catholique de Louvain, 1348, Louvain-la-Neuve, Belgium
        {\tt\small \{lucas.egidio,thiago.alveslima, raphael.jungers\}@uclouvain.be}.%
    }%
}
\begin{document}

 \maketitle
\thispagestyle{empty}
\pagestyle{empty}

%%%%%%%%%%%%%%%%%%%%%%%%%%%%%%%%%%%%%%%%%%%%%%%%%%%%%%%%%%%%%%%%%%%%%%%%%%%%%%%%
\begin{abstract}
	In this manuscript, we investigate symbolic abstractions that capture the behavior of piecewise-affine systems under input constraints and bounded external noise. This is accomplished by considering local affine feedback controllers that are jointly designed with the symbolic model, which ensures that an alternating simulation relation between the system and the abstraction holds. The resulting symbolic system is called a \textit{state-feedback abstraction} and we show that it can be deterministic even when the original piecewise-affine system is unstable and non-deterministic. One benefit of this approach is the fact that the input space need not be discretized and the symbolic-input space is reduced to a finite set of controllers. When ellipsoidal cells and affine controllers are considered, we present necessary and sufficient conditions written as a semi-definite program for the existence of a transition and a robust upper bound on the transition cost. Two examples illustrate particular aspects of the theory and its applicability.
\end{abstract}

%%%%%%%%%%%%%%%%%%%%%%%%%%%%%%%%%%%%%%%%%%%%%%%%%%%%%%%%%%%%%%%%%%%%%%%%%%%%%%%%
	\section{Introduction}
	
	{Over the last years,} by leveraging formal verification methods, symbolic control techniques have provided a powerful framework for the control of complex systems under logic specifications (see the books \cite{belta2017formal} and \cite{tabuada2009verification} for surveys on this topic). {The study of cyber-physical systems under this perspective is motivated by} the increasing complexity of such dynamical systems, which intertwine more and more aspects of digital devices with real-world tasks. Some applications are, for instance, robotics~\cite{belta2007symbolic},  autonomous vehicles~\cite{borri2013decentralized}, biological systems~\cite{ghosh2004symbolic}, and temperature regulation~\cite{meyer2017compositional}. The optimal control problem, where the feedback controller must be designed while a given cost metric is minimized and user-defined specifications are respected, is the main interest of the present paper and has also received some previous attention from the research community.   
	
	For instance, in~\cite{mazo2011symbolic}, the \emph{time-optimal control problem} was studied under the symbolic approach leveraging the existence of an \emph{alternating simulation relation} between the symbolic and the real systems whereas, in~\cite{girard2012controller}, {the stronger assumption of the existence of an \emph{approximate bisimulation relation} was} adopted. Later, in~\cite{reissig2018symbolic}, a novel method {for more general \emph{undiscounted optimal control problems} was presented where the designed controller, which relies only on the symbolic (i.e., quantized) state information,} was shown to converge to the optimal one for the original system when the {adopted} discretization steps are arbitrarily small. {In the context of optimal control problems}, the authors of~\cite{legat2021abstraction} presented a branch-and-bound approach for hybrid systems with linear dynamics, which uses Q-learning to improve lower bounds and model-predictive control for obtaining upper bounds on the optimal cost function. These results were later generalized in~\cite{calbert2021alternating} for nonlinear systems under a hierarchical approach with different levels of discretizations that allow obtaining bounds on the objective cost. 

	One common point between most of these results is the fact that the input space is discretized and a \emph{growth bound} on the error between the real and the quantized state (see~\cite{reissig2016feedback, zamani2011symbolic}) is used to take into account the discretization error when computing transitions of the symbolic system. Although {this approach is numerically} efficient and {was shown to be} precise enough {for control purposes}, one main drawback {is that} in the absence of \emph{incremental stability}, it yields an over-approximation that increases the level of non-determinism in the symbolic system, as the distance between trajectories {starting close to each other can} grow over time. This non-determinism may hinder the performance of optimal path-finding algorithms such as Dijkstra and $A^*$ or even preclude the controller design whatsoever.

	In this paper, we {formalize} a novel abstraction-based approach in which the transitions are not labeled by discretized control inputs but by local {memoryless} state-feedback controllers that can ensure the determinism of the symbolic system, even when the concrete system is non-deterministic and {not incrementally stable}. This allows the robust optimal control design to be approximately solved as a shortest-path problem in a weighted digraph, instead of a hypergraph. In short, our contributions are:
	\begin{itemize} 
		\item We propose deterministic abstractions with transitions parameterized by local {affine} state-feedback controllers for non-deterministic piecewise-affine discrete-time systems{, which are shown (in Lemma~\ref{lem:S_sim_tildeS}) to be in a simulation relation with the concrete system}.
		\item We introduce a {non-conservative} numerical procedure to {decide} the existence of such state-feedback controllers ensuring a tight robust upper bound on the transition cost{, in the considered template}. This technique leverages the power of Linear Matrix Inequalities (LMIs) at the local design level {and is written as a convex optimization problem given in Corollary~\ref{coro:opt_prob_cost}}.
		\item {We state Theorem~\ref{theo:Lyapunov-like} that  generalizes Theorem~8 in~\cite{legat2021abstraction}. This allows the computation of an upper bound on the optimal cost using abstractions with overlapping cells in the state-space.}
	\end{itemize}

	Even though our design procedure for transitions demands more computational effort than other methods, such as those based on growth-bound functions~\cite{rungger2016scots}, the portion of the state-space on which these operations are carried out can be reduced by leveraging lazy frameworks (such as \cite{calbert2021alternating}) to compute the transitions only when and where needed. Moreover, the state-dependent nature of the {symbols in each} transition allow the adoption of larger discretization cells while avoiding the discretization of the input space, which can suit better systems with large state and input spaces. 

	{The approach we present in this paper can be related to previous work in the symbolic control literature. First, the resulting controller can be regarded as a piecewise-affine state-dependent control function, allowing us to characterize it as a control-driven discretization method as defined in~\cite{belta2007symbolic}. However, differently from early abstraction techniques used mainly in robotics applications such as \cite{conner2003composition,kress2009temporal}, the local controllers are synthesized prior to the path-planning problem being solved, which allows planning only over feasible trajectories of the concrete system. Some similar methodologies, but in slightly different contexts, were presented in \cite{egerstedt2003feedback}, where feedback and open-loop controllers were combined to satisfy specifications in the so-called ``motor programs'', and in \cite{fainekos2005hybrid}, in which a specific robotic system is similarly controlled, but while relying on bisimulation relations (which are in general harder to obtain than the alternating simulation relations used in our paper). More recently, the authors in \cite{nilsson2018barrier} also use local feedback controllers to ensure the existence of local barrier functions guaranteeing successful transitions between sets for continuous-time systems, and an algorithm for some robotic systems is provided.

	Also, other methodologies were successful in applying convex optimization-based techniques to synthesize abstractions, such as \cite{girard2007approximate,nilsson2017augmented,wongpiromsarn2010automatic,he2020bp}. However, our method differs from these previous works as we consider, in general, a more involved problem where transition costs are minimized in the local-control synthesis step. 
	
	\textbf{Notation:} The Minkowski sum {is} $\oplus$. By $X\succ0$ ($X\succeq0$) we denote that $X$ is a positive (semi-)definite matrix. The convex hull of $v_1,\dots,v_N$ is given as $\co\{v_1,\dots,v_N\}$. }

\section{Problem formulation}
Consider the nonlinear discrete-time system
\begin{equation}\label{eq:nonlinsys}
	x(k+1)=f\big(x(k),u(k),w(k)\big),\quad x(0)=x_0
\end{equation}
with state $x(k)\in\X\subseteq\R^{n_x}$, control input $u(k)\in\U\subseteq\R^{n_u}$ and exogenous input $w(k)\in\W\subseteq\R^{n_x}$ defined at instant $k\in\N$. The nonlinear nature of~\eqref{eq:nonlinsys} makes the process of designing feedback laws $u(k)=\kappa(x(k))$ hard to deal with, in general. The following assumption allows the derivation of a simple, yet versatile, alternative model.
\begin{assumption} \label{asmpt:bound and Lips}
	The set of exogenous inputs $\W$ is \emph{compact}  and the function $f:\X\times \U\times \W\rightarrow \X$ is \emph{Locally Lipschitz continuous}.
\end{assumption}

Under Assumption~\ref{asmpt:bound and Lips}, one can leverage the recent~\cite{sadraddini2018formal, singh2018mesh} or the classical~\cite{bemporad2005bounded,roll2004identification} identification/modeling procedures to write an alternative representation of the system~\eqref{eq:nonlinsys} through a non-deterministic piecewise-affine (PWA) system
\begin{equation}
	x(k+1) \in F(x(k),u(k)),~x(0)=x_0\label{eq:pwaffinesys}
\end{equation}
with
\begin{equation}
	F(x,u) = \{A_{\psi(x)}x+B_{\psi(x)}u+g_{\psi(x)}\}\oplus\Omega_{\psi(x)}\label{eq:fun_Fxu}
\end{equation}
where $\psi:\X\rightarrow\I_p=\{1,\dots,N_p\}$ selects one of the $N_p$ subsystems, each of which is associated to a {part} of $\X$. {We define} $\psi(x(k))$ as the index of the {part} that contains $x(k)$ at instant $k$. For each $i\in\I_p$, the set of matrices $(A_i,B_i,g_i)$ defines a \emph{nominal system} within the $i$-th {part} whereas the set $\Omega_i\subset\R^n$, with $0\in\Omega_i$,  characterizes both the exogenous input $w(k)$ and the uncertainties generated when representing the nonlinear system~\eqref{eq:nonlinsys} by the PWA system~\eqref{eq:pwaffinesys}. {As shown, for instance, in \cite[Theorem~4.6]{sadraddini2018formal}, the system~\eqref{eq:pwaffinesys} can} be built in a way that every solution to~\eqref{eq:nonlinsys} is also a solution to~\eqref{eq:pwaffinesys}. Moreover, the methods in~\cite{sadraddini2018formal} and~\cite{singh2018mesh} allow the construction of \eqref{eq:pwaffinesys} under either data-based or model-based approaches, i.e., when the non-linear model is unknown but some set of sampled trajectories are available or when the model is known and can be simulated offline.

{In this paper, we tackle an optimal control problem with a set of initial conditions $\X_0\subset\X$, a set of goal states $\X_*\subset\X$ and a set of obstacles $\mathcal{O}\subset\X$ to be avoided. The goal is to design a controller such that, for any trajectory $x(k)$ of the system~\eqref{eq:pwaffinesys} starting at a given $x_0\in \X_0$, a control sequence $u(k)\in \U$ can be generated such that there exists $K\in\N$ at which $x(K)$ enters $\X^*$ and the \emph{cost function} 
\begin{equation}
	\Cc(x,u,K) = \sum_{k=0}^{K-1} \J\big(x(k),u(k)\big)\label{eq:total_cost}
\end{equation}
is minimized, with $\J\big(x(k),u(k)\big)$ being a given \emph{stage cost}.
This is accomplished approximately by developing}
a novel type of abstract model for the PWA system~\eqref{eq:pwaffinesys} such that designing a controller (fulfilling the design specifications) for the abstract model immediately provides a controller for the PWA system {guaranteeing an upper bound on the cost~\eqref{eq:total_cost}}. 

To adopt a symbolic control approach, let us define a {\emph{transition system}} from~\eqref{eq:pwaffinesys} by the tuple $\Sys\coloneqq(\X, \U,\trans{ }_F)$ where $\trans{}_F\ \subset \X\times\U\times\X$ is the set of non-deterministic transitions defined as $\trans{}_F\ \coloneqq \{ (x,u,x') \in\X\times\U\times\X~:~ x'\in F(x,u)\}$. The inclusion $(x,u,x')\in\;\trans{}_F$ is equivalently denoted in this paper as $x\trans{u}_F x'$ which can be read as:  ``$x'\in\X$ \textit{may} be reached from $x\in\X$ by applying input $u\in\U$''. The system $\Sys$ has continuous state and input spaces, which can be discretized into cells to generate a discrete non-deterministic abstraction {(e.g., as in \cite{calbert2021alternating,rungger2016scots})}. This approach, however, is likely to hinder scalability of the control design. The methodology we propose avoids discretizing the input space $\U$ by considering instead a set of local feedback controllers $\kappa(x)$ that {can ensure deterministic transitions between discrete states}.

\section{State-feedback abstractions}
{
In this section we discuss properties of state-feedback abstractions, which are formalized in the following definition.
\begin{definition}[State-feedback abstractions]\label{def:sfabs}
	Consider a transition system given as the tuple $\Sys\coloneqq(\X, \U,\trans{ }_F)$. Consider also a transition system $\widetilde{\Sys}\coloneqq(\X_d,\K,\trans{}_{\widetilde{F}})$ where $\X_d$ is a set of cells $\xi\in\X_d$  such that $ \xi \subset \X$ and the set $\K$ contains memoryless state-feedback controllers $\kappa:\X\rightarrow\U$.
	
	 The system $\widetilde{\Sys}$ is a \emph{state-feedback abstraction} of $\Sys$ if and only if $\xi\trans{\kappa}_{\widetilde{F}} \xi'$ implies that for all $x\in \xi$ we have $F\big(x,\kappa(x)\big)\subset\xi'$. Also, the tuples in the set $\trans{\kappa}_{\widetilde{F}}$ are called \emph{state-feedback transitions} and $\Sys$ is said to be the \emph{corresponding  concrete system} of $\widetilde{\Sys}$.\envend
\end{definition}

A state-feedback abstraction can be deterministic or non-deterministic. However, in this work we focus on deterministic state-feedback abstractions as this attribute allows us to perform a symbolic control synthesis more efficiently in a digraph, rather than on a hypergraph.  Therefore, we can read $\xi\trans{\kappa}_{\widetilde{F}} \xi'$ as:  ``for all $x\in\xi$ we reach some $x'\in\xi'$ if we take $u=\kappa(x)$''. By definition, a controller $\kappa$ is available at some state $\xi\in\X_d$ in a state-feedback abstraction only if for some $\xi'\in\X_d$, when applied to the concrete system $\Sys$, it maps each $x\in\xi$ into some state $x'\in\xi'$ in one time step.}

Before introducing the simulation relation used in this paper, for {a given} set of transitions $\trans{}\ \subseteq \X\times\U\times\X$, {we} define the set-valued operators $\Post(x,\trans{},u)\coloneqq\{x'\in\X~:~x\trans{u}x'\}$  and $\Available(x,\trans{})\coloneqq\{u\in\U~:~\exists x'\in\X,~ x\trans{u}x'\}$ which enumerate, respectively, the states that may be reached from a state $x$ under input $u$ and the inputs $u$ available at $x$.

\begin{definition}[Alternating simulation relation~\cite{tabuada2009verification}]
\label{def:altsim}
Consider transition systems $\Sys_1 = (\mathcal{X}_1, \mathcal{U}_1, \trans{}_1)$ and $\Sys_2 = (\mathcal{X}_2, \mathcal{U}_2, \trans{}_2)$.
Given a relation $R \subseteq \mathcal{X}_1 \times \mathcal{X}_2$, consider the \emph{extended relation}
$R^e$ defined by the set of $(x_1, x_2, u_1, u_2)$ such that
for every $x_2' \in \Post(x_2,\trans{}_2,u_2)$,
there exists $x_1' \in \Post(x_1,\trans{}_1,u_1)$ such that $(x_1', x_2') \in R$.
If for all $(x_1, x_2) \in R$, and for all $u_1 \in \Available(x_1,\trans{}_1)$,
there exists $u_2 \in \Available(x_2,\trans{}_2)$ such that $(x_1, x_2, u_1, u_2) \in R^e$
then $R$ is an \emph{alternating simulation relation}, $R^e$ is its associated
\emph{extended alternating simulation relation} and
$\Sys_2$ is an \emph{alternating simulation} of $\Sys_1$. We denote that by  $\Sys_2\succcurlyeq_{\rm AS}{\Sys_1}$.\envend
\end{definition}

With the formal definition of an alternating simulation relation stated, let us present a lemma that relates the PWA system~\eqref{eq:pwaffinesys} and its state-feedback abstraction.
\begin{lemma}\label{lem:S_sim_tildeS}
Consider a {transition} system $\Sys\coloneqq{(\X, \U,\trans{ }_F)}$ and a corresponding state-feedback abstraction $\widetilde{\Sys}\coloneqq(\X_d,\K,\trans{}_{\widetilde{F}})$ {given as in Definition~\ref{def:sfabs}}. Then, $\Sys$ is an alternating simulation of $\widetilde{\Sys}$, that is, $\Sys\succcurlyeq_{\rm AS}\widetilde{\Sys}$.\envend
\end{lemma}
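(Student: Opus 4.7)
My approach is to exhibit an explicit relation $R \subseteq \X_d \times \X$ (the ``containment'' relation) and verify the alternating simulation condition of Definition~\ref{def:altsim} directly. Since the statement is $\Sys \succcurlyeq_{\rm AS} \widetilde{\Sys}$, I read Definition~\ref{def:altsim} with $\Sys_1 = \widetilde{\Sys}$ and $\Sys_2 = \Sys$, so $R$ must be a subset of $\X_d \times \X$. The natural candidate is
\begin{equation*}
R \coloneqq \{(\xi, x) \in \X_d \times \X : x \in \xi\}.
\end{equation*}

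The core of the plan is the following verification. Fix an arbitrary pair $(\xi, x) \in R$ and an arbitrary controller $\kappa \in \Available(\xi, \trans{}_{\widetilde{F}})$; I need to exhibit a control value $u \in \Available(x, \trans{}_F)$ such that $(\xi, x, \kappa, u) \in R^e$. The obvious choice is $u \coloneqq \kappa(x)$, which lies in $\U$ because $\kappa : \X \to \U$ by Definition~\ref{def:sfabs}, and is available at $x$ because $F(x,u)$ is non-empty by the construction~\eqref{eq:fun_Fxu} (since $0 \in \Omega_{\psi(x)}$, so $A_{\psi(x)}x + B_{\psi(x)}u + g_{\psi(x)} \in F(x,u)$). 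It then remains to show: for every $x' \in \Post(x, \trans{}_F, u) = F(x, \kappa(x))$, there is some $\xi' \in \Post(\xi, \trans{}_{\widetilde{F}}, \kappa)$ with $x' \in \xi'$. Since $\kappa \in \Available(\xi, \trans{}_{\widetilde{F}})$, pick any $\xi'$ with $\xi \trans{\kappa}_{\widetilde{F}} \xi'$; by the state-feedback abstraction property of Definition~\ref{def:sfabs}, $F(y, \kappa(y)) \subset \xi'$ for every $y \in \xi$, in particular for $y = x$, whence $x' \in \xi'$ and therefore $(\xi', x') \in R$.

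I do not expect any significant obstacle: the argument is essentially a one-line chase through the definitions, the only mildly delicate points being (i)~getting the ordering of the two systems right in Definition~\ref{def:altsim} (the ``for all/exists'' quantifiers are asymmetric) and (ii)~checking that $u = \kappa(x)$ is indeed an available input of $\Sys$, which follows from $\kappa$ taking values in $\U$ together with non-emptiness of $F(x,\kappa(x))$. Notably, the argument does not require $\widetilde{\Sys}$ to be deterministic, so the lemma holds in the full generality of Definition~\ref{def:sfabs}.
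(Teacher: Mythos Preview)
Your proof is correct and follows essentially the same approach as the paper: both use the containment relation $R=\{(\xi,x):x\in\xi\}$ and the choice $u=\kappa(x)$, then invoke the defining property of a state-feedback abstraction to conclude. Your version is in fact slightly more careful than the paper's, since you explicitly justify that $u=\kappa(x)$ is available at $x$ via non-emptiness of $F(x,\kappa(x))$, and you correctly observe that determinism of $\widetilde{\Sys}$ is not needed (the paper's proof tacitly assumes a unique $\xi'$).
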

\begin{proof}
Consider the inclusion relation $R\coloneqq\{(\xi,x)\in \X_d\times \X~:~x\in\xi \}$. The extended relation $R^e$ is defined as $R^e\coloneqq\{(\xi,x,\kappa,u)\in\X_d\times\X\times\K\times\U~:~  x\in\xi,~u=\kappa(x)\}$. By construction of $\widetilde{\Sys}$, for an arbitrary tuple $(\xi,x,\kappa,u)\in R^e$ we have that  $\Post(x,\trans{}_F,u)\subset \xi'$ where $\xi'$ is the only element of $\Post(\xi,\trans{}_{\widetilde{F}},\kappa)$, showing that $(\xi',x')\in R$ for all $x'\in\Post(x,\trans{}_F,u)$. Finally, notice that for any $(\xi,x)\in R$ and for all $\kappa\in \Available(\xi,\trans{}_{\widetilde{F}})$ we have $u=\kappa(x)\in\Available(x,\trans{}_F)$ and, by definition, $(\xi,x,\kappa,u)\in R^e$, showing that $\Sys$ is an alternating simulation of $\widetilde{\Sys}$.
\end{proof}

To better illustrate the alternating simulation relation $\Sys\succcurlyeq_{\rm AS}\widetilde{\Sys}$ we depict in Figure~\ref{fig:altsim} a tuple $(\xi,x,\kappa,u)$ in the extended alternating simulation relation $R^e$. The shaded blue area denotes $\Post(x,\trans{}_F,u)$ and the shaded red area is the image of $\xi$ under the controller $u=\kappa(x)$. As expected, for all $x'\in\Post(x,\trans{}_F,u)$ we have $x'\in\xi'$. Furthermore, for any pair $(\xi,x)\in R$ (i.e., that satisfies the relation $x\in\xi$) this property holds for every available controller $\kappa\in\Available(\xi,\trans{}_{\widetilde{F}})$ and some $u\in\U$ (e.g., $u=\kappa(x)$).
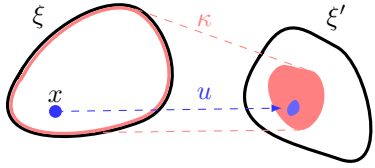
\begin{figure} 
\centering
\begin{tikzpicture}[scale=2.2]
	\draw [fill=none,draw=black, very thick] (0.9mm,5.4mm)
	.. controls ++(1.5mm,2.7mm) and ++(-1.9mm,1.0mm) .. ++(7.1mm,3.8mm)
	.. controls ++(1.9mm,-1.0mm) and ++(3.2mm,1.3mm) .. ++(0.2mm,-6.8mm)
	.. controls ++(-4.1mm,-1.6mm) and ++(-3.0mm,-4.4mm) ..  cycle
	;
	
	\draw [scale=0.95,fill=none,draw=red!50, very thick] (1.2mm,5.6mm)
	.. controls ++(1.5mm,2.7mm) and ++(-1.9mm,1.0mm) .. ++(7.12mm,3.9mm)
	.. controls ++(1.9mm,-1.0mm) and ++(3.2mm,1.3mm) .. ++(0.2mm,-6.8mm)
	.. controls ++(-4.1mm,-1.6mm) and ++(-3.2mm,-4.4mm) ..  cycle
	;
	
	\draw [fill=none,draw=black, very thick] (15.4mm,6.4mm)
	.. controls ++(1.5mm,2.7mm) and ++(-1.9mm,1.0mm) .. ++(4.6mm,0.5mm)
	.. controls ++(1.9mm,-1.0mm) and ++(3.2mm,-1.1mm) .. ++(0.2mm,-6.8mm)
	.. controls ++(-3.2mm,1.1mm) and ++(-3.0mm,-4.4mm) .. cycle
	;
	\draw [fill=red!50,draw=none] (16.0mm,5.2mm)
	.. controls ++(0.7mm,1.1mm) and ++(-1.1mm,0.1mm) .. ++(2.3mm,0.3mm)
	.. controls ++(1.1mm,-0.1mm) and ++(1.3mm,1.0mm) .. ++(0.1mm,-3.4mm)
	.. controls ++(-1.3mm,-1.1mm) and ++(-0.7mm,-1.1mm) .. cycle
	;
	\draw [fill=blue!60,draw=none] (17.0mm,3.1mm)
	.. controls ++(0.0mm,0.2mm) and ++(-0.2mm,0.0mm) .. ++(0.5mm,0.6mm)
	.. controls ++(0.2mm,-0.0mm) and ++(0.3mm,0.2mm) .. ++(0.1mm,-0.8mm)
	.. controls ++(-0.5mm,-0.4mm) and ++(-0.0mm,-0.2mm) ..  cycle
	;
	\draw[red!50,dashed] (0.3,0.17) -- (1.78,0.188);
	\draw[red!50,dashed] (0.78,0.92) -- (1.868,0.534);
	\fill[blue!80] (0.3,0.3) circle (1.1pt);
	\draw[dashed,-latex,blue!80] (0.3,0.3) -- (1.67,0.32);
	
	\node[below] at(0.2,1) {$\xi$};
	\node[below] at(2,1) {$\xi'$};
	\node[above] at(0.3,0.3)  {$x$};
	\node[below] at(1.2,0.5)  {\color{blue!80}$u$};
	\node at(1.2,0.85)  {\color{red!50}$\kappa$};
\end{tikzpicture}
\caption{{For {a transition} system ${\Sys}$ and {state-feedback abstraction} $\widetilde{\Sys}$, we represent $(\xi,x,\kappa,u) \in R^e$, i.e., the extended alternating simulation relation given in the proof of Lemma~\ref{lem:S_sim_tildeS}.}}\label{fig:altsim}
\end{figure}

Finally, let us present a final result that allows us to ensure that an upper bound on the cost of a trajectory of system $\Sys$ can be derived from its state-feedback abstraction. The following definition is in order.
\begin{definition}\label{def:lyapunov}
	A value function $v:\X\rightarrow\R$ is a \emph{Lyapunov-like function} with stage cost $\J(x,u)$ for a symbolic system $\Sys = (\mathcal{X}, \mathcal{U}, \trans{})$ in a domain $\X_v\subseteq\X$ if {$v(x)$ is bounded from below within $\X_v$ and  for all $x\in\X_v$ }there exists $u\in\Available(x,\trans{})$ fulfilling the Bellman inequality
	\begin{equation}\label{eq:bellman_inequality}
		v(x) \geq \J(x,u)+\max_{x'\in \Post(x,\trans{},u)}v(x').
	\end{equation}
	Moreover, if $v(x)$ satisfies {the non-strict inequality}~\eqref{eq:bellman_inequality} at the equality, its said to be the \emph{optimal cost-to-go function}.\envend
%Moreover, if $v(x)$ satisfies {this non-strict inequality} at the equality, its said to be the \emph{optimal cost-to-go function}.\envend
\end{definition}

The following theorem generalizes Theorem~8 in \cite{legat2021abstraction} to derive a Lyapunov-like function for an alternating simulation based on another Lyapunov-like function of its simulated system.
\begin{theorem}[{Generalized from \cite{legat2021abstraction}}]\label{theo:Lyapunov-like}
	{Consider transition systems} $\Sys_1 = (\mathcal{X}_1, \mathcal{U}_1, \trans{}_1)$ and $\Sys_2 = (\mathcal{X}_2, \mathcal{U}_2, \trans{}_2)$  such that $\Sys_2$ is an alternating simulation for $\Sys_1$, as given in Definition~\ref{def:altsim}. Let $R(x_2)\subseteq\X_1$ be the set of all $x_1\in\X_1$ such that $(x_1,x_2)$ are in the alternating simulation relation $R$. If $v_1:\X_1\rightarrow\R$ is a Lyapunov-like function for system $\Sys_1$ with cost function $\J_1(x_1,u_1)$ then 
	\begin{equation}
		v_2(x_2) = \min_{x_1\in R(x_2)}v_1(x_1)
	\end{equation}
	is a Lyapunov-like function for $\Sys_2$ with any cost function $\J_2(x_2,u_2)$ that verifies
	\begin{equation}\label{eq:costs_relation}
		\J_1(x_1,u_1)\geq \J_2(x_2,u_2),~~\forall (x_1,x_2,u_1,u_2)\in R^e.
	\end{equation}
\end{theorem}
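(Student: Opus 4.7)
The plan is to prove the theorem by unwinding the definitions and chaining three ingredients together: the Bellman-type inequality guaranteed by $v_1$ being Lyapunov-like on $\Sys_1$, the quantifier structure of the alternating simulation $\Sys_2\succcurlyeq_{\rm AS}\Sys_1$ (which provides a matching input $u_2$ for every abstract input $u_1$), and the assumed inequality~\eqref{eq:costs_relation} relating the two stage costs.

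First, I would fix an arbitrary $x_2\in\X_2$ (in the domain where $v_2$ should be Lyapunov-like) and choose $x_1^\star\in R(x_2)$ attaining the minimum in the definition of $v_2$, so that $v_2(x_2)=v_1(x_1^\star)$. Since $v_1$ is Lyapunov-like for $\Sys_1$, there exists $u_1\in\Available(x_1^\star,\trans{}_1)$ such that
\begin{equation*}
v_1(x_1^\star)\ \geq\ \J_1(x_1^\star,u_1)+\max_{x_1'\in\Post(x_1^\star,\trans{}_1,u_1)}v_1(x_1').
\end{equation*}
Because $(x_1^\star,x_2)\in R$ and $\Sys_2$ is an alternating simulation of $\Sys_1$, Definition~\ref{def:altsim} supplies a $u_2\in\Available(x_2,\trans{}_2)$ with $(x_1^\star,x_2,u_1,u_2)\in R^e$, i.e.\ every $x_2'\in\Post(x_2,\trans{}_2,u_2)$ admits some $x_1'\in\Post(x_1^\star,\trans{}_1,u_1)$ with $(x_1',x_2')\in R$.

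Next I would propagate these through $v_2$. For such $x_2'$ and the witnessing $x_1'$, since $x_1'\in R(x_2')$, the definition of $v_2$ gives $v_2(x_2')\leq v_1(x_1')\leq \max_{\bar x_1'\in\Post(x_1^\star,\trans{}_1,u_1)}v_1(\bar x_1')$. Taking the max over $x_2'\in\Post(x_2,\trans{}_2,u_2)$ preserves the inequality. Combining with the Bellman inequality above and the cost bound~\eqref{eq:costs_relation} applied to $(x_1^\star,x_2,u_1,u_2)\in R^e$, I obtain
\begin{equation*}
v_2(x_2)=v_1(x_1^\star)\ \geq\ \J_1(x_1^\star,u_1)+\max_{x_1'}v_1(x_1')\ \geq\ \J_2(x_2,u_2)+\max_{x_2'\in\Post(x_2,\trans{}_2,u_2)}v_2(x_2'),
\end{equation*}
which is precisely the Bellman inequality required by Definition~\ref{def:lyapunov} for $v_2$ at $x_2$. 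Boundedness from below of $v_2$ on its domain is immediate, since $v_2(x_2)\geq \inf v_1$ by construction.

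The two points that need some care (and will be the main obstacle) are: (i) ensuring that the $x_1^\star\in R(x_2)$ realizing the minimum exists and lies in the domain where $v_1$ is Lyapunov-like, so that the Bellman step is legitimate — this is the standard caveat that the result should be read pointwise on $\{x_2 : R(x_2)\cap\X_{v_1}\neq\emptyset\}$ or under a mild compactness/closedness hypothesis; and (ii) keeping the quantifiers straight when invoking alternating simulation, because the specific $u_1$ delivered by the Bellman inequality must be the one fed to $\Sys_2\succcurlyeq_{\rm AS}\Sys_1$, and the matching $x_1'$ for each $x_2'$ is not required to be unique — using only the existence of some $x_1'\in R(x_2')\cap\Post(x_1^\star,\trans{}_1,u_1)$ suffices to bound $v_2(x_2')$ from above by the maximum, which is exactly what the chain of inequalities needs.
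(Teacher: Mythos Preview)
Your proposal is correct and follows essentially the same approach as the paper's proof: pick a minimizer $x_1^\star\in R(x_2)$, apply the Bellman inequality for $v_1$ at $x_1^\star$, use the alternating simulation to produce a matching $u_2$, bound $v_2(x_2')$ by $\max_{x_1'}v_1(x_1')$ via the witness $x_1'\in R(x_2')\cap\Post(x_1^\star,\trans{}_1,u_1)$, and combine with~\eqref{eq:costs_relation}. Your additional remarks on boundedness from below and on the existence of the minimizer are reasonable caveats that the paper leaves implicit.
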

\begin{proof}
	 From the definition of $v_2(x_2)$, for any $x_2\in\X_2$ we have $v_2(x_2)= v_1(x_1)$ for all $x_1\in R^*(x_2):=\arg\min_{x_1\in R(x_2)}v_1(x_1)$. 
	 	 
	 As $v_1(x_1)$ is a Lyapunov-like function, it satisfies the Bellman inequality~\eqref{eq:bellman_inequality} for all $x_1\in R^*(x_2)$ and some input $u_1\in\Available(x_1,\trans{}_1)$. Therefore,
	 \begin{align}
	  \J_1(x_1,u_1)+\max_{x_1'\in\Post(x_1,\trans{}_1,u_1) } v_1(x_1') &\leq v_1(x_1)\nonumber\\
	  &= v_2(x_2)\label{eq:cost_proof_step_1}
	 \end{align}
	
	Also because $x_1\in R^*(x_2)$ we have that $x_1$ and $x_2$ are in alternating simulation relation and, thus, for any $u_1\in\Available(x_1,\trans{}_1)$ (for instance, one satisfying the Bellman inequality) there exists an $u_2\in\Available(x_2,\trans{}_2)$ such that $(x_1,x_2,u_1,u_2)\in R^e$. As a consequence, we can use~\eqref{eq:costs_relation} and~\eqref{eq:cost_proof_step_1} to obtain
	\begin{equation}\label{eq:cost_proof_step_2}
		v_2(x_2) \geq  \J_2(x_2,u_2)+\max_{x_1'\in\Post(x_1,\trans{}_1,u_1)} v_1(x_1').
	\end{equation}

	By the definition of the extended alternating simulation relation $R^e$ we have that $(x_1,x_2,u_1,u_2)\in R^e$ implies that for any $x_2'\in\Post_2(x_2,\trans{}_2,u_2)$  the set $\Post(x_1,\trans{}_1,u_1)\cap R(x_2')$ is not empty, showing that
	\[\begin{aligned}
		v_2(x_2') &= \min_{x_1'\in R(x_2')} v_1(x_1')\\
		& \leq \min_{x_1'\in\Post_1(x_1,\trans{}_1,u_1)\cap R(x_2')} v_1(x_1')\\
		& \leq \max_{x_1'\in\Post_1(x_1,\trans{}_1,u_1)\cap R(x_2')} v_1(x_1')\\
		& \leq \max_{x_1'\in\Post_1(x_1,\trans{}_1,u_1)} v_1(x_1').
	\end{aligned}\]
	This, together with~\eqref{eq:cost_proof_step_2} yields 
	\begin{equation}
		v_2(x_2) \geq \J_2(x_2,u_2)+v_2(x_2') 
	\end{equation}
	for all $x_2'\in\Post_2(x_2,\trans{}_2,u_2)$ showing that $v_2(x_2)$ is a Lyapunov-like function for $\Sys_2$ with cost $\J_2(x_2,u_2)$.
\end{proof}

The improvement of Theorem~\ref{theo:Lyapunov-like} with respect to Theorem~8 in \cite{legat2021abstraction} is that it allows that more than one $x_1$ belongs to $R(x_2)$ whereas in \cite{legat2021abstraction} it was imposed that $R(x_2)$ is a singleton set. This is important in our context as, in practice, this enables the use of abstractions with overlapping cells to obtain Lyapunov-like functions for the systems they are in alternating simulation relation with. This aspect will be further discussed in the following section.

\section{Building symbolic abstractions with affine-feedback controllers}
\subsection{Transition design}
{An important classic result recalled in this section is the so-called S-procedure \cite[Section~2.6.3]{boyd1994linear}, given below.}
\begin{lemma}[S-procedure \cite{boyd1994linear}]\label{lem:Sproc}
Let $N+1$ quadratic functions $q_i(x)=x^\top R_ix+2s_i^\top x +r_i$,~$i\in\{0,\dots,N\}$. We have  $q_0(x)\geq 0$ for all $x\in\R^n$ such that $q_1(x)\geq0,\dots,q_N(x)\geq0$ if there exists $\beta_1\geq0,\dots,\beta_N\geq0$ such that
\begin{equation}
	\begin{bmatrix}
		R_0 & s_0\\
		s_0^\top & r_0
	\end{bmatrix}\succeq \sum_{i=1}^{N}\beta_i
	\begin{bmatrix}
		R_i & s_i\\
		s_i^\top & r_i
	\end{bmatrix}.
\end{equation} 
The converse holds if $N=1$.\envend
\end{lemma}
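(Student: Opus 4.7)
The plan is to treat the two directions separately. For the sufficiency (the ``if'' direction), I would lift everything to quadratic forms on $\R^{n+1}$. Introducing the augmented vector $\bar{x} = [x^\top,\,1]^\top$ and the symmetric matrices $M_i = \begin{bmatrix} R_i & s_i\\ s_i^\top & r_i \end{bmatrix}$ lets us write $q_i(x) = \bar{x}^\top M_i \bar{x}$ for each $i$. Multiplying the hypothesized matrix inequality on the left by $\bar{x}^\top$ and on the right by $\bar{x}$ then yields the scalar pointwise inequality $q_0(x) \geq \sum_{i=1}^{N} \beta_i q_i(x)$. Since $\beta_i \geq 0$ and $q_i(x) \geq 0$ by hypothesis for $i \geq 1$, the right-hand side is non-negative, so $q_0(x) \geq 0$. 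This part is essentially a bookkeeping calculation.

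For the converse in the case $N=1$, which is the lossless S-lemma and the main difficulty, I would use a convex separation argument. The plan is to consider the image set $\Gamma = \{(q_1(x),\, q_0(x)) : x \in \R^n\} \subset \R^2$ and to observe that the hypothesis forbids $\Gamma$ from intersecting $\{(b,a) : b \geq 0,\, a < 0\}$. Separating an appropriate convex enlargement of $\Gamma$ from this forbidden region produces a non-negative multiplier $\beta_1$ for which $q_0(x) - \beta_1 q_1(x) \geq 0$ for every $x \in \R^n$. The main obstacle is justifying the separation: it rests on the hidden convexity of the joint range of two quadratic forms (essentially Dines' theorem), and typically requires a Slater-like regularity assumption (e.g., the existence of some $x_0$ with $q_1(x_0) > 0$) to rule out degenerate cases. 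This is also precisely why the converse fails when $N \geq 2$.

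The remaining step is to promote the scalar non-negativity $q_0(x) - \beta_1 q_1(x) \geq 0$ on $\R^n$ to the matrix inequality $M_0 \succeq \beta_1 M_1$. This is a standard homogenization argument: the quadratic form $\bar{x}^\top (M_0 - \beta_1 M_1) \bar{x}$ is non-negative on the affine slice $\{\bar{x}_{n+1}=1\}$ by construction, and its positive semi-definiteness on all of $\R^{n+1}$ is recovered by rescaling together with the observation that the leading principal block must itself be positive semi-definite (otherwise a violating direction, scaled by $\lambda \to \infty$, would contradict non-negativity on the slice). Since this lemma is a classical result attributed to~\cite{boyd1994linear}, I would keep the converse direction at the level of a sketch, particularly because only the sufficiency is invoked in the LMI-based construction of state-feedback transitions in the remainder of the paper.
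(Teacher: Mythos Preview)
The paper does not supply its own proof of this lemma; it is merely recalled as a classical result from~\cite{boyd1994linear}, so there is nothing in the paper to compare your argument against. Your sketch is correct and follows the standard route: the sufficiency is immediate after lifting to the augmented quadratic forms $\bar{x}^\top M_i\bar{x}$, and the converse for $N=1$ is the lossless S-lemma, usually obtained via Dines' convexity of the joint range of two quadratic forms together with a separation argument and a Slater-type regularity condition. Your homogenization step to pass from non-negativity on the affine slice $\{\bar{x}_{n+1}=1\}$ to $M_0-\beta_1 M_1\succeq 0$ is also the standard one.

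One small correction to your closing remark: it is not true that only the sufficiency is invoked later in the paper. The proofs of Theorem~\ref{theo:th1} and Corollary~\ref{coro:opt_prob_cost} explicitly use the \emph{if and only if} direction of the S-procedure (with $N=1$) to argue that the LMI conditions are both necessary and sufficient, which is what underpins the paper's claim that the upper bound $\widetilde{\J}$ is tight and the transition conditions are non-conservative within the chosen template. So the converse does matter here, although keeping it at the level of a sketch with a citation is still appropriate given that it is a textbook result.
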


The present methodology for building the symbolic abstraction can be independently carried out over the $N_p$ subsystems of~\eqref{eq:pwaffinesys} and, to ease the notation, let us consider a single affine system
\begin{equation}
x(k+1) = Ax(k)+Bu(k)+g+\omega(k),~x(0)=x_0\label{eq:affinesys}
\end{equation}
where, at instant $k\in\N$,  $x(k)\in\X\subseteq\R^{n_x}$ is the state vector, $u(k)\in\U\subseteq \R^{n_u}$ is the control input, and $\omega(k)\in\Omega\coloneqq\co\{\omega_1,\dots,\omega_{N_\omega}\}\subset\R^{n_x}$ is a polytopic non-deterministic disturbance.	Let us consider a starting set 
\begin{equation}
	\B_s = \{x\in\R^n~:~ (x-c)'P(x-c)\leq1\}\label{eq:starting_set}
\end{equation}
 and a final set 
\begin{equation}
	\B_f = \{x\in\R^n~:~ (x-c_+)'P_+(x-c_+)\leq1\}, \label{eq:final_set}
\end{equation}
where the positive definite matrices $P,~P_+$, and the centers $c,~c_+,$ are known \textit{a priori} and chosen such that $\xi\subseteq\B_s$ and $\xi'\supseteq\B_f$ for a given pair of cells $(\xi,\xi')\in\X_d$. Our goal is to verify whether there exists a controller $\kappa$ such that $u=\kappa(x)$ ensures $x'\in\xi'$ for all $x\in\xi$ and $x'\in\Post(x,\trans{}_F,u)$. Also, for all $x\in\xi$, the controller $\kappa(x)$ must generate only valid inputs inside $\Available(x,\trans{}_F)\subseteq \U$.

In this paper we consider affine controllers of the form
\begin{equation}
\kappa(x)=K(x-c)+\ell\label{eq:control}
\end{equation}
where $K\in\R^{n_u\times n_x}$ and $\ell\in\R^{n_u}$ are designed to ensure that for all $x(k)\in\B_s$  we have that the control $u(k)=\kappa(x(k))\in\U$  implies that $x(k+1)\in\B_f$, under any $\omega(k)\in\Omega$. The choice of affine controllers is reasonable in our context as they can represent local first-order approximations of continuous non-linear ones.
Let us consider 
\begin{equation}
\U = \bigcap_{i=1}^{N_u} \bar\U_i,\qquad \bar\U_i=\{ u\in\R^{n_u}~:~ ||U_iu||_2 \leq 1 \},\label{eq:set_U}
\end{equation}
for given matrices $U_i$ of appropriate dimensions. Notice that $\U$ is the intersection of (possibly degenerated) ellipsoids.
\begin{theorem}\label{theo:th1}
For all $x(k)\in\B_s$ we have $x(k+1)\in\B_f$ for system~\eqref{eq:affinesys} under the constrained affine control law~\eqref{eq:control} and any $\omega(k)\in\Omega\coloneqq\co\{\omega_1,\dots,\omega_{N_\omega}\}\subset\R^{n_x}$ if and only if there exist $K\in\R^{n_u\times n_x}$, $\ell\in\R^{n_u}$, and scalars $\beta_i>0$, $\tau_i>0$ such that
\begin{equation}
	\begin{bmatrix}
		\beta_i P	& 0 		& (A+BK)^\top\\
		\bullet & 1-\beta_i	& \mu_i^\top\\
		\bullet & \bullet	& P_+^{-1}
	\end{bmatrix}\succeq0,~ i\in\{1,\dots,N_\omega\}\label{eq:lmi1}
\end{equation}
with $\mu_i= g+Ac+B\ell+\omega_i-c_+ $ and
\begin{equation}
	\begin{bmatrix}
		\tau_i P	& 0 		& K^\top U_i^\top\\
		\bullet & 1-\tau_i	& \ell^\top U_i^\top\\
		\bullet & \bullet	& I
	\end{bmatrix}\succeq0,\quad i\in\{1,\dots,N_u\},\label{eq:lmi2}
\end{equation}
{with given matrices $U_i$ defining $\U$ in~\eqref{eq:set_U}}.
\end{theorem}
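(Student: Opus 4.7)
The plan is to reduce the claim to two independent S-procedure applications, one for the state-inclusion requirement and one for the input-admissibility requirement. First, change variables to $z := x - c$, so that $\B_s$ becomes the unit ellipsoid $\{z : z^\top P z \le 1\}$. Under the affine controller $\kappa(x) = Kz + \ell$, the one-step update reads $x(k+1) - c_+ = (A+BK)z + (g + Ac + B\ell + \omega - c_+)$, which equals $(A+BK) z + \mu_i$ whenever $\omega = \omega_i$.

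For the state constraint, note that because $P_+ \succ 0$, the map $y \mapsto y^\top P_+ y$ is convex, so the supremum of $(x(k+1) - c_+)^\top P_+ (x(k+1) - c_+)$ over $\omega$ in the polytope $\Omega = \co\{\omega_1,\dots,\omega_{N_\omega}\}$ is attained at a vertex. Hence ``$x(k+1) \in \B_f$ for every $\omega \in \Omega$'' is equivalent to the finite family of single-constraint implications, one per $i \in \{1,\dots,N_\omega\}$: ``$z^\top P z \le 1$ implies $((A+BK)z + \mu_i)^\top P_+ ((A+BK)z + \mu_i) \le 1$.'' Applying Lemma~\ref{lem:Sproc} with $N = 1$, which is the lossless case and hence yields an ``if and only if,'' rewrites each implication as the existence of $\beta_i \ge 0$ such that a certain $2 \times 2$ block quadratic form in $(z, 1)$ is PSD; a Schur complement against the $(3,3)$ block $P_+^{-1} \succ 0$ then turns that form into exactly~\eqref{eq:lmi1}.

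For the input constraint, $\kappa(x) \in \U$ for every $x \in \B_s$ decouples over the ellipsoidal factors in~\eqref{eq:set_U}, becoming, for each $i \in \{1,\dots,N_u\}$, the implication ``$z^\top P z \le 1$ implies $(Kz + \ell)^\top U_i^\top U_i (Kz + \ell) \le 1$.'' The same lossless S-procedure argument produces a multiplier $\tau_i \ge 0$, and a Schur complement against the identity block yields~\eqref{eq:lmi2}.

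The main obstacle is combining two pieces: (i) the vertex reduction, which is needed to replace the continuum $\omega \in \Omega$ by a finite family of quadratic implications, and (ii) the converse direction of Lemma~\ref{lem:Sproc}, which is the reason the LMIs are also \emph{necessary} rather than merely sufficient, and which applies here because each implication involves only a single quadratic constraint ($N = 1$). The strict positivity of $\beta_i$ and $\tau_i$ stated in the theorem is a minor technicality: the interior of $\B_s$ is nonempty (Slater holds for the single constraint $z^\top P z \le 1$), so any nonnegative multiplier returned by the S-procedure can be taken strictly positive outside degenerate cases, and the remaining algebra is purely routine Schur-complement manipulation.
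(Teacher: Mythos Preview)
Your argument is correct and follows essentially the same route as the paper: the lossless $N=1$ S-procedure applied to each quadratic implication, followed by a Schur complement, with a convexity/vertex argument to handle the polytopic disturbance. The only cosmetic difference is ordering: you shift coordinates $z=x-c$ at the outset and reduce to the vertices $\omega_i$ before invoking the S-procedure, whereas the paper works in homogeneous coordinates $\tilde{x}=[x^\top\ 1]^\top$, applies the S-procedure first, and only afterwards performs the shift via the congruence $\Theta$ and the vertex reduction at the LMI level.
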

\begin{proof}
Let $\tilde{x}=[x^\top~1]^\top$, consider an arbitrary $\omega\in\Omega$ and rewrite system~\eqref{eq:affinesys} under the control law~\eqref{eq:control} as
\[\tilde{x}(k+1) = \Ac\tilde{x}(k),~~\Ac \!=\! \begin{bmatrix}
	A+BK & g+B\ell-BKc+\omega\\ 0 &1
\end{bmatrix}\!.\] 
Similarly, define the matrices
\[\Pc = \begin{bmatrix}
	P & -Pc\\
	-c^\top P & c^\top Pc
\end{bmatrix},\quad \Pc_+ = \begin{bmatrix}
	P_+ & -P_+c_+\\
	-c_+^\top P_+ & c_+^\top P_+c_+
\end{bmatrix}\]
which can be used to equivalently redefine $\B_s$ as $\{x\in\R^n~:~\tilde{x}^\top \Pc\tilde{x}\leq1\}$ and $\B_f$ in the same fashion. Recalling the S-procedure (Lemma~\ref{lem:Sproc}), there exist $\beta>0$ fulfilling the inequality
\begin{equation}
	\Ac^\top \Pc_+\Ac-\Lc\preceq\beta ( \Pc-\Lc)  \label{eq:ineq_proof}
\end{equation}
with $\Lc=\diag(0,\dots,0,1)$, if and only if for all $x(k)\in\B_s$ we have $x(k+1)\in\B_f$. Now, let us show that~\eqref{eq:ineq_proof} is equivalent to~\eqref{eq:lmi1}. Developing the former, we obtain
\[
\begin{bmatrix}
	\Acl^\top P_+\Acl & \Acl^\top P_+\chi\\
	\bullet  		 & \chi^\top P_+\chi-1
\end{bmatrix}\preceq \beta\begin{bmatrix}
	P & -Pc\\
	\bullet & c^\top Pc-1
\end{bmatrix},
\]
with $\Acl=A+BK$ and $\chi = g+B\ell-BKc+\omega-c_+$,
which is rewritten by the Schur Complement Lemma \cite{boyd1994linear} as
\[\begin{bmatrix}
	\beta P 		& -\beta Pc 		& \Acl^\top \\
	\bullet & \beta (c^\top Pc-1) + 1		& \chi^\top \\
	\bullet & \bullet	& P_+^{-1}
\end{bmatrix}\succeq0\]
Then, multiply this last inequality to the right by
\[\Theta=\begin{bmatrix}
	I & c & 0\\
	0 & 1 & 0\\
	0 & 0 & I
\end{bmatrix}\] and to the left by its transpose to obtain
\begin{equation}
	\begin{bmatrix}
		\beta P	& 0 		& \Acl^\top \\
		\bullet & 1-\beta	& c^\top \Acl^\top +\chi^\top \\
		\bullet & \bullet	& P_+^{-1}
	\end{bmatrix}\succeq0.
\end{equation}
Given that $\omega\in\Omega=\co\{\omega_1,\dots,\omega_{N_\omega}\}$ we verify that this last inequality is equivalent to~\eqref{eq:lmi1} considering some $\beta\in\co\{\beta_1,\dots,\beta_{N_\omega}\}$.

Finally, to ensure that $\kappa(x)\in\U$ for all $x\in\B_s$ let us again recall the S-procedure, which allows rewriting this statement equivalently as: there exist scalars $\tau_i>0$ such that
\begin{equation}
	\begin{bmatrix}
		K^\top\! U_i^\top\! U_iK &\!\!\!\! -\!K^\top\! U_i^\top\! U_i(Kc\!-\!\ell)\\
		\bullet  & \!\!\!\!\!(Kc\!-\!\ell)^\top\!\! U_i^\top \!U_i(Kc\!-\!\ell)
	\end{bmatrix}\!-\!\Lc\!\preceq\tau_i ( \Pc\!-\!\Lc),\nonumber
\end{equation}
for all $i\in\{1,\dots,N_u\},$ which, after applying the Schur Complement Lemma, yields
\[\begin{bmatrix}
	\tau_i P 		& -\tau_i Pc 		& K^\top U_i^\top \\
	\bullet & \tau_i (c^\top Pc-1) + 1		&  -(Kc-\ell)^\top U_i^\top \\
	\bullet & \bullet	& I
\end{bmatrix}\succeq0.\]
Finally, multiplying this inequality to the right by $\Theta$ and to the left by its transpose yields~\eqref{eq:lmi2}, concluding the proof.
\end{proof}

\subsection{Optimal cost bound}

To introduce a performance objective to this control design, we consider a general quadratic transition cost function of the form
\begin{equation}\label{eq:cost}
	\J(x,u) = \begin{bmatrix}
		x\\
		u\\
		1
	\end{bmatrix}^\top \!\!Q\begin{bmatrix}
		x\\
		u\\
		1
	\end{bmatrix} 
\end{equation}
defined for some given positive semi-definite matrix $Q$ that can be decomposed as $Q=L^\top L$. Notice that this form is general enough to characterize, for instance, any quadratic cost defined as $\J(x,u)=(u-u_0)^\top Q_u(u-u_0)+ (x-x_0)^\top Q_x(x-x_0) $ for positive semi-definite  matrices $Q_u,Q_x$ and vectors $u_0,~x_0$ of appropriate dimensions, or a time-cost $\J(x,u)=T$ for some sampling-time $T>0$. Even more representative costs could be considered without major difficulties by adopting a transition-cost function defined as the maximum over several quadratic forms given as in~\eqref{eq:cost} but, for sake of clarity of the following developments, we decided to consider here the quadratic cost.

\begin{corollary}\label{coro:opt_prob_cost}
For given sets $\B_s$ and $\B_f$ defined in~\eqref{eq:starting_set}-\eqref{eq:final_set}, the solution to the convex-optimization problem
\begin{align}
	\inf_{K,\ell,\beta_i\geq0,\tau_i\geq0,\gamma\geq0,\widetilde{\J}\geq0} \widetilde{\J} \quad {\rm s.t.}~\text{\eqref{eq:lmi1}},& ~\text{\eqref{eq:lmi2},}\label{eq:opt_prob_obj}\\
	 \begin{bmatrix}
		\gamma P & 0 & [I~~K^\top ~~0]L^\top  \\
		\bullet &\widetilde{\J}-\gamma & [c^\top ~~\ell^\top ~~1]L^\top \\
		\bullet & \bullet & I
	\end{bmatrix}\succeq0 \label{eq:opt_prob_cnstr}
\end{align}
{satisfies $\widetilde{\J}=\max_{x\in\B_s}\J(x,\kappa(x))$ where the state-feedback control $\kappa(x)$ given in~\eqref{eq:control} ensures a transition from $\B_s$ to $\B_f$ as given in Theorem~\ref{theo:th1}}. 
\end{corollary}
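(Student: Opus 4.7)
The plan is to treat Corollary~\ref{coro:opt_prob_cost} as an addendum to Theorem~\ref{theo:th1}. The LMIs \eqref{eq:lmi1} and \eqref{eq:lmi2} are exactly the feasibility certificates of that theorem, so whenever they hold the affine controller $u=\kappa(x)=K(x-c)+\ell$ drives every $x\in\B_s$ into $\B_f$ while respecting $\U$. What remains is to show that \eqref{eq:opt_prob_cnstr}, together with the minimization of $\widetilde{\J}$, encodes precisely the worst-case transition cost $\max_{x\in\B_s}\J(x,\kappa(x))$ and attains it at the infimum.

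I would start by shifting coordinates to $y=x-c$, so that the augmented vector $[x^\top,\kappa(x)^\top,1]^\top$ equals $N_1 y+N_2$ for the block matrices
\[
N_1=\begin{bmatrix}I\\K\\0\end{bmatrix},\qquad N_2=\begin{bmatrix}c\\\ell\\1\end{bmatrix},
\]
and the cost becomes the quadratic form $\J(x,\kappa(x))=(N_1 y+N_2)^\top Q(N_1 y+N_2)$. The set $\B_s$ is now described by the \emph{single} quadratic constraint $y^\top P y\leq 1$.

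Next, because only one quadratic constrains $y$, Lemma~\ref{lem:Sproc} applies as an equivalence: the bound $\widetilde{\J}\geq\J(x,\kappa(x))$ holds for every $x\in\B_s$ if and only if there exists $\gamma\geq 0$ such that
\[
\widetilde{\J}-\gamma+\gamma\,y^\top P y\;\geq\;(N_1 y+N_2)^\top Q(N_1 y+N_2)\qquad\forall y\in\R^{n_x}.
\]
Substituting $Q=L^\top L$ and applying a Schur complement against an $I$ block pulls the factor $L[N_1,N_2]$ into the off-diagonal position, yielding an LMI that coincides with \eqref{eq:opt_prob_cnstr} once the column blocks $LN_1$ and $LN_2$ are recognized as the transposes of the rows $[I,K^\top,0]L^\top$ and $[c^\top,\ell^\top,1]L^\top$ appearing in the statement.

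Combining the two halves, any feasible tuple $(K,\ell,\gamma,\widetilde{\J})$ satisfies $\widetilde{\J}\geq\max_{x\in\B_s}\J(x,\kappa(x))$, and the converse direction of the S-procedure (valid because only one constraining quadratic is present) guarantees that the infimum is attained at exactly that maximum. The only real obstacle is the Schur-complement bookkeeping that turns the S-procedure matrix inequality into the specific block pattern of \eqref{eq:opt_prob_cnstr}; the rest is a direct assembly of Theorem~\ref{theo:th1} with Lemma~\ref{lem:Sproc}.
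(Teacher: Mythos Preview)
Your proposal is correct and follows essentially the same route as the paper: express $\J(x,\kappa(x))$ as a quadratic in the shifted variable, invoke the S-procedure (Lemma~\ref{lem:Sproc}) with the single ellipsoidal constraint defining $\B_s$, and apply a Schur complement with $Q=L^\top L$ to obtain~\eqref{eq:opt_prob_cnstr}. The only cosmetic difference is that you perform the shift $y=x-c$ at the outset, whereas the paper works in the augmented coordinates $\tilde{x}=[x^\top~1]^\top$ and recovers the same centered form at the end via a congruence with the matrix $\Theta$ introduced in the proof of Theorem~\ref{theo:th1}.
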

\begin{proof}
	The proof that $\widetilde{\J}$ is an upper bound on the stage cost $\J(x,u)$ inside $\B_s$ under the control law~\eqref{eq:control} follows similar steps as the proof of Theorem~\ref{theo:th1}. First, considering~\eqref{eq:control}, one can rewrite 
	\begin{equation}
		\begin{bmatrix}
			x\\u\\1
		\end{bmatrix} = V\begin{bmatrix}
		x\\1
	\end{bmatrix},~~~V=\begin{bmatrix}
		I & 0\\
		K & \ell-Kc\\
		0 & 1
	\end{bmatrix},
	\end{equation}
	which allows to equivalently express the stage cost~\eqref{eq:cost} as $\J(x,\kappa(x))= \tilde{x}^\top V^\top L^\top LV\tilde{x}$.
	Now, consider the condition ``$\widetilde{\J}\geq\J(x,u)$ for all $x\in\B_s$''. Through the S-procedure, this is equivalent to the existence of $\gamma\geq0$ such that
	\begin{equation}
		\tilde{x}^\top V^\top L^\top LV\tilde{x} -\widetilde{\J}\leq \gamma(\tilde{x}^\top \Pc \tilde{x} -\Lc),
	\end{equation}
	which, in turn, can be rewritten through the Schur Complement Lemma as
	\begin{equation}
		\begin{bmatrix}
			\gamma P  & -\gamma Pc & \\
			\bullet & \gamma (c^\top Pc-1) + \widetilde{\J} & \smash{\raisebox{.5\normalbaselineskip}{$[V^\top \! L^\top]$}}\\ 
			\bullet &\bullet & I 
		\end{bmatrix}\succeq 0.
	\end{equation}
	Multiplying this inequality to the right by $\Theta$ and to the left by $\Theta^\top$ yields~\eqref{eq:opt_prob_cnstr}. The minimization over $\widetilde{\J}$ ensures $\max_{x\in\B_s}\J(x,\kappa(x))=\widetilde{\J}$, concluding the proof.
\end{proof}

The convex optimization problem  \eqref{eq:opt_prob_obj}-\eqref{eq:opt_prob_cnstr} is the core of our methodology. It not only provides an efficient way (by semi-definite programming) to determine whether there exists a controller~\eqref{eq:control} taking all states from $\B_s$ to $\B_f$ fulfilling the system requirements but also provides a tight upper-bound $\widetilde{\J}$ on the stage cost $\J(x,u)$. The tightness of this upper-bound is ensured by the fact that all mathematical steps employed in the proof of Theorem~\ref{theo:th1} and Corollary~\ref{coro:opt_prob_cost} preserves equivalence between statements (e.g., Schur Complement Lemma, S-procedure with $N=1$, congruence transformations).

 Finally, notice that the upper bound $\widetilde{\J}$ {is a cost function verifying the condition of Theorem~\ref{theo:Lyapunov-like}, allowing to use it} to determine a Lyapunov-like function for system~\eqref{eq:pwaffinesys} with cost~\eqref{eq:cost} when a Lyapunov-like function for the symbolic system $\widetilde{\Sys}$ with transition cost $\widetilde{\J}$ is provided.

Before presenting the related numerical experiments, let us present some further remarks regarding the conservatism of the presented approach. First, let us consider a nominal system with no input constraints and no exogenous input, i.e., $\U=\R^{n_u}$ and $\Omega=\{0\}$. In an optimistic scenario where there exist $\ell\in\R^{n_u}$ such that $Ac+B\ell+g=c_+$ (i.e., $c_+$ is reached from $c$ with $u=\ell$) we would have $\mu_i=0$  in the LMI~\eqref{eq:lmi1} and this constraint becomes simply a reminiscent of the Lyapunov inequality $(A+BK)^\top P_+(A+BK) -\beta P\preceq 0$ for some $\beta\in[0,1]$. If the starting and final sets $\B_s$ and $\B_f$ are chosen to have the same size and shape, we have $P_+=P$. Therefore, in this setting, a necessary and sufficient condition for the existence of a controller $\kappa$ as in~\eqref{eq:control} ensuring the transition from  $\B_s$ to $\B_f$ is that the pair $(A,B)$ is stabilizable. This provides us with a good guess for the shape of the sets $\B_s$ and $\B_f$, which can be defined using matrices $P$ satisfying the stabilizability condition. Also notice that, if a state-independent  control input $\kappa=\ell$ was considered, a necessary condition for feasibility of~\eqref{eq:lmi1} would be the stability of $A$, which is an overwhelming requirement from a control theory perspective. Indeed, the presence of the linear term on the state in the affine feedback controllers~\eqref{eq:control} is paramount to ensure that the state trajectories in the starting ball are not locally diverging from one another, reducing the non-determinism in the symbolic model.
One last observation is that the problem is linear in $c$, $c_+$ and $P_+^{-1}$. This can also allow the use of this method to adjust the positioning of the sets $\B_s$ and $\B_f$ or the shape of $P_+^{-1}$ in order to attain feasibility, to better bound the transition cost, or to search for least-violating controllers.

\section{Numerical experiments}
\subsection{One single transition}\label{sec:single_transition}
In this example we study aspects of determining a single transition for given starting and final sets $\B_s$ and  $\B_f$, as in~\eqref{eq:starting_set} and~\eqref{eq:final_set}. Consider the system \eqref{eq:affinesys} given by $A=e^{TA_c}$, $B=\int_{0}^{T}e^{tA_c}B_cdt$, $g=0$ with $T=0.5$,
\begin{equation}
	A_c=\begin{bmatrix}
	0 & 1 & 0\\
	0 & 0 & 1\\
	1 &-1 &-1
	\end{bmatrix},~B_c = \begin{bmatrix}
	0\\0\\1
\end{bmatrix}.
\end{equation}
The control input $u$ is constrained as $|u|\leq10$ and the exogenous input $\omega$ has entries bounded in norm by $\omega_{\max} = 0.01$.
Although unstable, this system is stabilizable and
\[P_0= \begin{bmatrix}
	2.8106 & 1.6583 & 1.0143\\
	1.6583 & 4.4629 & 1.4071\\
	1.0143 & 1.4071 & 2.3453
\end{bmatrix}\]
satisfies the closed-loop Lyapunov inequality $\Acl^\top P_0\Acl-P_0\prec0$ for some matrix $K$. Then, we define $\B_s$ and  $\B_f$ by $P=\nu^{-1} P_0$, $P_+=\eta P$, $c=0$ and $c_+= [0.1~0.5~1.9]^\top$ where $\nu>0$ is a \emph{volume multiplier} for $\B_s$ and $\eta>0$ is a \emph{contraction ratio} factor. In practice, greater values of $\nu$ imply larger volumes ${\rm vol}(\B_s)$ and  $\eta$ verifies the equality ${\rm vol}(\B_f)\eta^{{n_x}/2}={\rm vol}(\B_s)$. For several different values of $\eta$ and $\nu$, we solved the optimization problem \eqref{eq:opt_prob_obj}-\eqref{eq:opt_prob_cnstr} considering the quadratic cost $\J(x,u)=x^\top x +u^\top u$.

On average, each solution to \eqref{eq:opt_prob_obj}-\eqref{eq:opt_prob_cnstr} was found in 0.0158 seconds on an Intel$^{\circledR}$ Core$^{\text{\tiny TM}}$ i7-10610U CPU \@ 1.80~GHz$\times$8 with $16$~GB of memory and using the Julia JuMP \cite{JuMP2017} interface with the Mosek solver on Ubuntu 20.04.

In Figure~\ref{fig:ex1}, on the left-hand-side plot, one can notice that for increasing volumes of $\B_s$ or increasing contraction ratios $\eta$, larger costs are obtained. Indeed, having larger volumes means that the set on which $\J(x,u)$ must be bounded increases and higher contraction ratios $\eta$ demand a larger control effort. On the right-hand side, in turn, for a fixed $\eta=1$, we varied $\omega_{\max}$ to investigate the effect of the exogenous input on the closed-loop spectral radius $\rho(\Acl)$. Doing so, we noticed that allowing for larger uncertainties $\omega(k)$ tends to shrink the spectrum of $\Acl$ and the nominal system is controlled more aggressively (i.e, with a larger decay-rate). This can be also interpreted as the controller mapping the nominal system (e.g., $\omega(k)=0$) from $\B_s$ into a smaller set inside $\B_f$ to compensate for the larger uncertainties and ensure a successful transition.

\begin{figure}

	\centering
	\includegraphics[width=0.39\linewidth]{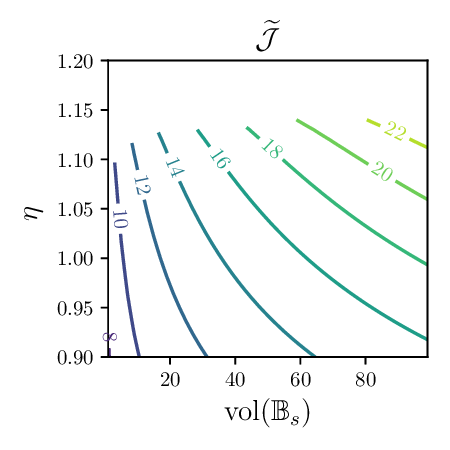}
	\includegraphics[width=0.39\linewidth]{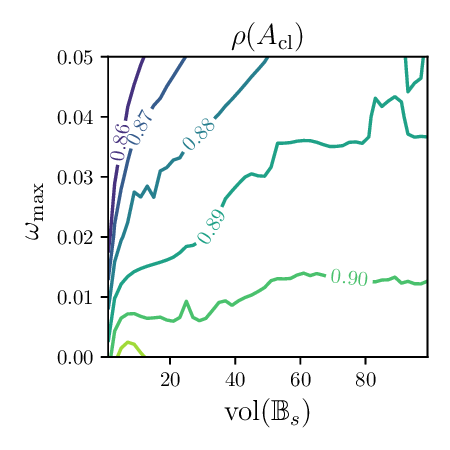}
	\caption{(Section~\ref{sec:single_transition}) To the left, value of the upper-bound $\widetilde{\J}$ on the cost function $\J(x,u)$ for different volumes of the starting set $\B_s$ and different contraction ratios $\eta$ of the final set $\B_f$. To the right, the closed-loop spectral radius $\rho(\Acl)$  for different volumes of the starting set $\B_s$ and varying bounds $\omega_{\max}$ on the exogenous disturbance $\omega(k)$. }\label{fig:ex1}
\end{figure}

\subsection{Optimal control}\label{ex:example_2}
%\begin{figure}
%	\centering
%	\def\svgwidth{0.5\linewidth}
%	
%	\caption{\todo{todo caption}}
%\end{figure}
In this example we provide one possible application of the method that uses the state-feedback transition system for the optimal control of piecewise-affine systems.
Consider a system $\Sys\coloneqq(\X, \U,\trans{ }_F)$ with the transition function $F(x,u)$ defined in~\eqref{eq:fun_Fxu} by 
\begin{equation}
	A_1=\begin{bmatrix}
		1.01 & 0.3\\
		-0.1 & 1.01
	\end{bmatrix}\nonumber,~B_1=\begin{bmatrix}
		1&0\\ 0 & 1
	\end{bmatrix},~g_1=\begin{bmatrix}
	-0.1\\-0.1
\end{bmatrix},
\end{equation}
$A_2=A_1^\top,~ A_3=A_1,~B_2=B_3=B_1,~g_2=0$ and $g_3=-g_1$. These systems are three spiral sources with unstable equilibria at $x_{e1}=[-0.9635~~0.3654]^\top,~x_{e2}=0,$ and $x_{e3}=-x_{e1}$. We also define the additive-noise sets $\Omega_1=\Omega_2=\Omega_3=[-0.05,0.05]^2$, the control-input set $\U=[-0.5,0.5]^2$ and the state space $\X=[-2,2]^2$. The $N_p=3$ partitions of $\X$ are $\X_1= \{x\in\X~:~x_1\leq-1 \},~\X_3= \{x\in\X~:~x_1>1 \},$ and $\X_2=\X\setminus(\X_1\cup\X_3)$. The goal is to bring the state $x$ from the initial set $\X_0$ to a final set $\X_*$, while avoiding the obstacle $\Oc$, presented in Figure~\ref{fig:ex2_traj_lyap}. The associated stage-cost function $\J(x,u)$ is defined in \eqref{eq:cost} with $Q=\diag(10^{-2}I,0)$ which evenly penalize states and inputs far away from the origin. To build a deterministic state-feedback abstraction in alternating simulation relation  with the system as described in Lemma~\ref{lem:S_sim_tildeS}, a set of balls of radius 0.2 covering the state space is adopted as cells $\xi\in\X_d$. We assume that inside cells intersecting the boundary of partitions of $\X$ the selected piecewise-affine mode is the same all over its interior and given by the mode defined at its center. An alternative to this is to split these cells and use the S-Procedure to incorporate the respective cuts into the design problem, but we do not proceed in this way to favor a clear illustration of the results. 

We thus compute state-feedback transitions between these cells using the results in Corollary~\ref{coro:opt_prob_cost}. To avoid solving the corresponding optimization problem for every pair of cells, we over approximate the reachable set of each cell by the growth-bound proposed in~\cite[Theorem~VIII.5]{reissig2016feedback} and only compute transition targeting cells with a non-empty intersection with this over-approximation for $\omega=0$. 

After about 206 seconds, 6984 state-feedback transitions were created among the cells, in the same computational setup as in the previous example. Finally, applying Dijkstra's algorithm \cite[p.~86]{bertsekas2012dynamic} to the reverse deterministic graph of the state-feedback abstraction with edge costs defined by the upper bounds $\widetilde{\J}$ (calculated from each solution of \eqref{eq:opt_prob_obj}-\eqref{eq:opt_prob_cnstr}), we obtained in 0.04 seconds a Lyapunov-like function for the abstraction, as given in Definition~\ref{def:lyapunov}. Given that the upper bounds $\widetilde{\J}$ satisfy the conditions of Theorem~\ref{theo:Lyapunov-like}, this Lyapunov-like function can be also used for the original system, taking into account the  underlying alternating simulation relation. A simulated trajectory implementing the controller associated to the shortest path found by Dijkstra's algorithm and undergoing random noise inputs is depicted in Figure~\ref{fig:ex2_traj_lyap}. For this problem, the guaranteed total cost was 2.05 (obtained from the Dijkstra's algorithm) whereas the true total cost of this {specific} trajectory was 1.29, showing that the \emph{worst-case} bound obtained was sufficiently close to {this value}. Also, the associated Lyapunov-like function is represented for each cell in the abstraction. {Although the methodology adopted in this example (discretizing the state-space) may suffer with the curse of dimensionality, its purpose is to illustrate the state-feedback transitions developed in this paper in optimal control problems with complex specifications.}

\begin{figure}
	\centering
	\begin{tikzpicture}[scale=0.8]
		\pgftext{%
			\includegraphics[width=0.5\linewidth]{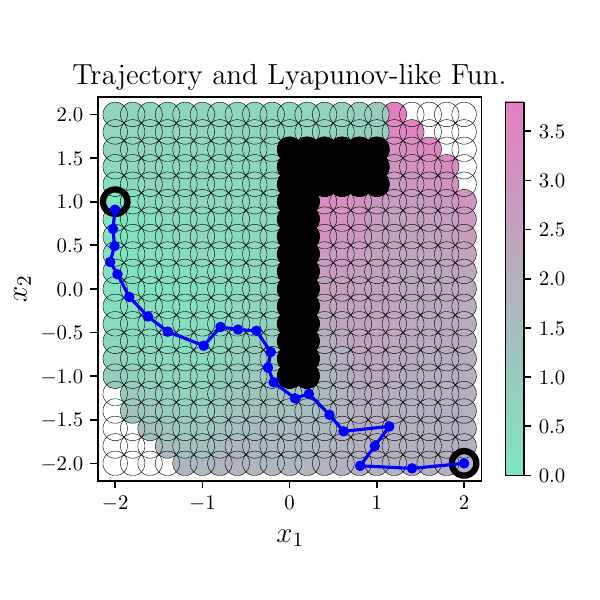}%
		}%
		%\draw[step=1.0,red, very thick] (-3,-3) grid (3,3);
		\node at (0,1.2) {\color{white} \footnotesize$\Oc$};
		\node[fill=white, opacity=0.8, text opacity=1] at (-2.6,1.9) {\footnotesize$\X_*$};
		\node[fill=white, opacity=0.8, text opacity=1] at (2.4,-1.8) {\footnotesize$\X_0$};
	\end{tikzpicture}
\vspace{-.8cm}
	\caption{(Section~\ref{ex:example_2}) State trajectory (blue line) and Lyapunov-like function (color map) obtained for the optimal control problem of departing from $\X_0$ and reaching $\X_*$ under given specifications. Non-colored represents a region where no Lyapunov-like function could be defined in this state space.}\label{fig:ex2_traj_lyap}
\end{figure}

\section{Conclusions and Future Work}
In this work we propose \emph{state-feedback transitions} that are transitions between cells of a symbolic system parameterized by local feedback controllers that are correct-by-design, i.e., satisfy the transition requirements and input constraints. These transitions allows us to reduce the non-determinism in symbolic abstractions and to state an alternating simulation relation with the original system. The construction of a state-feedback transition is done by solving a convex optimization problem (with linear matrix inequalities as constraints) and a tight upper bound on the transition cost is then obtained. This allows us to compute Lyapunov-like functions for both the symbolic and real systems. Two examples illustrate particularities and usefulness of the presented methodology.

For future work, we seek to extend these results to design not only the transitions but also the positioning and shape of the cells along the optimal trajectories.

\bibliographystyle{ieeetr}
\bibliography{refs}

\begin{thebibliography}{10}

\bibitem{belta2017formal}
C.~Belta, B.~Yordanov, and E.~A. Gol, {\em Formal methods for discrete-time
  dynamical systems}, vol.~15.
\newblock Springer, 2017.

\bibitem{tabuada2009verification}
P.~Tabuada, {\em Verification and control of hybrid systems: a symbolic
  approach}.
\newblock Springer Science \& Business Media, 2009.

\bibitem{belta2007symbolic}
C.~Belta, A.~Bicchi, M.~Egerstedt, E.~Frazzoli, E.~Klavins, and G.~J. Pappas,
  ``Symbolic planning and control of robot motion [grand challenges of
  robotics],'' {\em IEEE Robotics \& Automation Magazine}, vol.~14, no.~1,
  pp.~61--70, 2007.

\bibitem{borri2013decentralized}
A.~Borri, D.~V. Dimarogonas, K.~H. Johansson, M.~D. Di~Benedetto, and G.~Pola,
  ``Decentralized symbolic control of interconnected systems with application
  to vehicle platooning,'' {\em IFAC Proceedings Volumes}, vol.~46, no.~27,
  pp.~285--292, 2013.

\bibitem{ghosh2004symbolic}
R.~Ghosh and C.~Tomlin, ``Symbolic reachable set computation of piecewise
  affine hybrid automata and its application to biological modelling:
  Delta-notch protein signalling,'' {\em IET Systems biology}, vol.~1, no.~1,
  pp.~170--183, 2004.

\bibitem{meyer2017compositional}
P.-J. Meyer, A.~Girard, and E.~Witrant, ``Compositional abstraction and safety
  synthesis using overlapping symbolic models,'' {\em IEEE Transactions on
  Automatic Control}, vol.~63, no.~6, pp.~1835--1841, 2017.

\bibitem{mazo2011symbolic}
M.~Mazo~Jr and P.~Tabuada, ``Symbolic approximate time-optimal control,'' {\em
  Systems \& Control Letters}, vol.~60, no.~4, pp.~256--263, 2011.

\bibitem{girard2012controller}
A.~Girard, ``Controller synthesis for safety and reachability via approximate
  bisimulation,'' {\em Automatica}, vol.~48, no.~5, pp.~947--953, 2012.

\bibitem{reissig2018symbolic}
G.~Reissig and M.~Rungger, ``Symbolic optimal control,'' {\em IEEE Transactions
  on Automatic Control}, vol.~64, no.~6, pp.~2224--2239, 2018.

\bibitem{legat2021abstraction}
B.~Legat, R.~M. Jungers, and J.~Bouchat, ``Abstraction-based branch and bound
  approach to {Q}-learning for hybrid optimal control,'' in {\em Learning for
  Dynamics and Control (L4DC)}, pp.~263--274, 2021.

\bibitem{calbert2021alternating}
J.~Calbert, B.~Legat, L.~N. Egidio, and R.~M. Jungers, ``Alternating simulation
  on hierarchical abstractions,'' in {\em IEEE Conference on Decision and
  Control (CDC)}, 2021.

\bibitem{reissig2016feedback}
G.~Reissig, A.~Weber, and M.~Rungger, ``Feedback refinement relations for the
  synthesis of symbolic controllers,'' {\em IEEE Transactions on Automatic
  Control}, vol.~62, no.~4, pp.~1781--1796, 2016.

\bibitem{zamani2011symbolic}
M.~Zamani, G.~Pola, M.~Mazo, and P.~Tabuada, ``Symbolic models for nonlinear
  control systems without stability assumptions,'' {\em IEEE Trans. on
  Automatic Control}, vol.~57, no.~7, pp.~1804--1809, 2011.

\bibitem{rungger2016scots}
M.~Rungger and M.~Zamani, ``Scots: A tool for the synthesis of symbolic
  controllers,'' in {\em International Conference on Hybrid Systems:
  Computation and Control (HSCC)}, pp.~99--104, 2016.

\bibitem{conner2003composition}
D.~C. Conner, A.~A. Rizzi, and H.~Choset, ``Composition of local potential
  functions for global robot control and navigation,'' in {\em IEEE/RSJ
  International Conference on Intelligent Robots and Systems (IROS)}, vol.~4,
  pp.~3546--3551, 2003.

\bibitem{kress2009temporal}
H.~Kress-Gazit, G.~E. Fainekos, and G.~J. Pappas, ``Temporal-logic-based
  reactive mission and motion planning,'' {\em IEEE Transactions on Robotics},
  vol.~25, no.~6, pp.~1370--1381, 2009.

\bibitem{egerstedt2003feedback}
M.~B. Egerstedt and R.~W. Brockett, ``Feedback can reduce the specification
  complexity of motor programs,'' {\em IEEE Transactions on Automatic Control},
  vol.~48, no.~2, pp.~213--223, 2003.

\bibitem{fainekos2005hybrid}
G.~E. Fainekos, H.~Kress-Gazit, and G.~J. Pappas, ``Hybrid controllers for path
  planning: A temporal logic approach,'' in {\em IEEE Conference on Decision
  and Control (CDC)}, pp.~4885--4890, 2005.

\bibitem{nilsson2018barrier}
P.~Nilsson and A.~D. Ames, ``Barrier functions: Bridging the gap between
  planning from specifications and safety-critical control,'' in {\em IEEE
  Conference on Decision and Control (CDC)}, pp.~765--772, 2018.

\bibitem{girard2007approximate}
A.~Girard and G.~J. Pappas, ``Approximate hierarchies of linear control
  systems,'' in {\em IEEE Conference on Decision and Control (CDC)},
  pp.~3727--3732, 2007.

\bibitem{nilsson2017augmented}
P.~Nilsson, N.~Ozay, and J.~Liu, ``Augmented finite transition systems as
  abstractions for control synthesis,'' {\em Discrete Event Dynamic Systems},
  vol.~27, no.~2, pp.~301--340, 2017.

\bibitem{wongpiromsarn2010automatic}
T.~Wongpiromsarn, U.~Topcu, and R.~M. Murray, ``Automatic synthesis of robust
  embedded control software,'' in {\em 2010 AAAI Spring Symposium Series},
  2010.

\bibitem{he2020bp}
B.~He, J.~Lee, U.~Topcu, and L.~Sentis, ``Bp-rrt: Barrier pair synthesis for
  temporal logic motion planning,'' in {\em IEEE Conference on Decision and
  Control (CDC)}, pp.~1404--1409, 2020.

\bibitem{sadraddini2018formal}
S.~Sadraddini and C.~Belta, ``Formal guarantees in data-driven model
  identification and control synthesis,'' in {\em Int. Conference on Hybrid
  Systems: Computation and Control (HSCC)}, pp.~147--156, 2018.

\bibitem{singh2018mesh}
K.~R. Singh, Q.~Shen, and S.~Z. Yong, ``Mesh-based affine abstraction of
  nonlinear systems with tighter bounds,'' in {\em IEEE Conference on Decision
  and Control (CDC)}, pp.~3056--3061, 2018.

\bibitem{bemporad2005bounded}
A.~Bemporad, A.~Garulli, S.~Paoletti, and A.~Vicino, ``A bounded-error approach
  to piecewise affine system identification,'' {\em IEEE Transactions on
  Automatic Control}, vol.~50, no.~10, pp.~1567--1580, 2005.

\bibitem{roll2004identification}
J.~Roll, A.~Bemporad, and L.~Ljung, ``Identification of piecewise affine
  systems via mixed-integer programming,'' {\em Automatica}, vol.~40, no.~1,
  pp.~37--50, 2004.

\bibitem{boyd1994linear}
S.~Boyd, L.~El~Ghaoui, E.~Feron, and V.~Balakrishnan, {\em Linear matrix
  inequalities in system and control theory}.
\newblock SIAM, 1994.

\bibitem{JuMP2017}
I.~Dunning, J.~Huchette, and M.~Lubin, ``Jump: A modeling language for
  mathematical optimization,'' {\em SIAM Review}, vol.~59, no.~2, pp.~295--320,
  2017.

\bibitem{bertsekas2012dynamic}
D.~Bertsekas, {\em Dynamic programming and optimal control: Volume I}, vol.~1.
\newblock Athena scientific, 2012.

\end{thebibliography}
\end{document}